\newtheorem{theorem}{Theorem}
\theoremstyle{plain}
\newtheorem{corollary}{Corollary}
\newtheorem{definition}{Definition}
\newtheorem{lemma}{Lemma}
\newtheorem{remark}{Remark}
\numberwithin{equation}{section}
\begin{document}
\title[On generalization of different type inequalities]{Generalization of
different type integral inequalities for $(\alpha ,m)-$convex functions via
fractional integrals}
\author{\.{I}mdat \.{I}\c{s}can}
\address{Department of Mathematics, Faculty of Sciences and Arts, Giresun
University, Giresun, Turkey}
\email{imdat.iscan@giresun.edu.tr; imdati@yahoo.com}
\subjclass[2000]{ 26A51, 26A33, 26D15. }
\keywords{$(\alpha ,m)-$convex function, Hermite--Hadamard inequality,
Simpson type inequalities, Riemann--Liouville fractional integral.}

\begin{abstract}
In this paper, a general integral identity for a twice differentiable
functions is derived. By using of this identity, the author establishes some
new Hermite-Hadamard type and Simpson type inequalities for differentiable $%
(\alpha ,m)-$convex functions via Riemann Liouville fractional integral.
\end{abstract}

\maketitle

\section{Introduction}

Let $f:I\subseteq \mathbb{R\rightarrow R}$ be a function defined on the
interval $I$ of real numbers. Then $f$ is called convex if%
\begin{equation*}
f\left( tx+(1-t)y\right) \leq tf(x)+(1-t)f(y)
\end{equation*}%
for all $x,y\in I$ and $t\in \lbrack 0,1].$ There are many results
associated with convex functions in the area of inequalities, but one of
those is the classical Hermite Hadamard inequality:%
\begin{equation}
f\left( \frac{a+b}{2}\right) \leq \frac{1}{b-a}\dint\limits_{a}^{b}f(x)dx%
\leq \frac{f(a)+f(b)}{2}\text{.}  \label{1-1}
\end{equation}%
$a,b\in I$ with $a<b$. The another important and well known inequality is
Simpson's inequality. This inequality is stated as:

Let $f:\left[ a,b\right] \mathbb{\rightarrow R}$ be a four times
continuously differentiable mapping on $\left( a,b\right) $ and $\left\Vert
f^{(4)}\right\Vert _{\infty }=\underset{x\in \left( a,b\right) }{\sup }%
\left\vert f^{(4)}(x)\right\vert <\infty .$ Then the following inequality
holds:%
\begin{equation*}
\left\vert \frac{1}{3}\left[ \frac{f(a)+f(b)}{2}+2f\left( \frac{a+b}{2}%
\right) \right] -\frac{1}{b-a}\dint\limits_{a}^{b}f(x)dx\right\vert \leq 
\frac{1}{2880}\left\Vert f^{(4)}\right\Vert _{\infty }\left( b-a\right) ^{4}.
\end{equation*}

In \cite{M93}, V.G. Mihesan presented the class of $(\alpha ,m)$-convex
functions as below:

\begin{definition}
The function $f:\left[ 0,b\right] \mathbb{\rightarrow R}$, $b>0$, is said to
be $(\alpha ,m)$-convex where $(\alpha ,m)\in \left[ 0,1\right] ^{2}$, if we
have%
\begin{equation*}
f\left( tx+m(1-t)y\right) \leq t^{\alpha }f(x)+m(1-t^{\alpha })f(y)
\end{equation*}
\end{definition}

for all $x,y\in \left[ 0,b\right] $ and $t\in \left[ 0,1\right] $.

Note that for $(\alpha ,m)\in \left\{ (0,0),(\alpha
,0),(1,0),(1,m),(1,1),(\alpha ,1)\right\} $ one obtains the following
classes of functions respectively: increasing, $\alpha $-starshaped,
starshaped, $m$-convex, convex, $\alpha $-convex functions.

Denote by $K_{m}^{\alpha }(b)$ the set of all $(\alpha ,m)$-convex functions
on $\left[ 0,b\right] $ for which $f(0)\leq 0$. For recent results and
generalizations concerning $(\alpha ,m)$-convex functions see \cite%
{I13,I13b,OAK11,P12b,WLFZ12}.

We give some necessary definitions and mathematical preliminaries of
fractional calculus theory which are used throughout this paper.

\begin{definition}
Let $f\in L\left[ a,b\right] $. The Riemann-Liouville integrals $%
J_{a^{+}}^{\kappa }f$ and $J_{b^{-}}^{\kappa }f$ of oder $\kappa >0$ with $%
a\geq 0$ are defined by

\begin{equation*}
J_{a^{+}}^{\kappa }f(x)=\frac{1}{\Gamma (\kappa )}\dint\limits_{a}^{x}\left(
x-t\right) ^{\kappa -1}f(t)dt,\ x>a
\end{equation*}

and

\begin{equation*}
J_{b^{-}}^{\kappa }f(x)=\frac{1}{\Gamma (\kappa )}\dint\limits_{x}^{b}\left(
t-x\right) ^{\kappa -1}f(t)dt,\ x<b
\end{equation*}%
respectively, where $\Gamma (\kappa )$ is the Gamma function defined by $%
\Gamma (\kappa )=$ $\dint\limits_{0}^{\infty }e^{-t}t^{\kappa -1}dt$ and $%
J_{a^{+}}^{0}f(x)=J_{b^{-}}^{0}f(x)=f(x).$
\end{definition}

In the case of $\kappa =1$, the fractional integral reduces to the classical
integral. Recently, many authors have studied a number of inequalities by
used the Riemann Liouville fractional integrals, see \cite%
{I13c,SO12,SSYB11,S12,SY13}\ and the references cited therein.

In \cite{SA11}, Sarikaya et.al established some new inequalities of the
Simpson and the Hermite--Hadamard type for functions whose absolute values
of derivatives are convex:

\begin{theorem}
Let $I\subset 
\mathbb{R}
$ be an open interval, $a,b\in I$ with $a<b$\ and $f:I\rightarrow 
\mathbb{R}
$ be a twice differentiable mapping on $I^{\circ }$ such that $f^{\prime
\prime }$ is integrable and $0\leq \lambda \leq 1.$ If $|f^{\prime \prime
}|^{q}$ is a convex mapping on $[a,b]$, $q\geq 1$, then the following
inequalities hold:%
\begin{equation}
\left\vert \left( 1-\lambda \right) f\left( \frac{a+b}{2}\right) +\lambda 
\frac{f(a)+f(b)}{2}-\frac{1}{\left( b-a\right) }\dint\limits_{a}^{b}f(x)dx%
\right\vert  \label{1-1.1}
\end{equation}%
\begin{equation*}
\leq \left\{ 
\begin{array}{c}
\frac{\left( b-a\right) ^{2}}{2}\left( \frac{\lambda ^{3}}{3}+\frac{%
1-3\lambda }{24}\right) ^{1-\frac{1}{q}} \\ 
\times \left\{ \left( \left[ \frac{\lambda ^{4}}{6}+\frac{3-8\lambda }{%
3.2^{6}}\right] \left\vert f^{\prime \prime }(a)\right\vert ^{q}+\left[ 
\frac{\left( 2-\lambda \right) \lambda ^{3}}{6}+\frac{5-16\lambda }{3.2^{6}}%
\right] \left\vert f^{\prime \prime }(b)\right\vert ^{q}\right) ^{\frac{1}{q}%
}\right. \\ 
\left. +\left( \left[ \frac{\lambda ^{4}}{6}+\frac{3-8\lambda }{3.2^{6}}%
\right] \left\vert f^{\prime \prime }(b)\right\vert ^{q}+\left[ \frac{\left(
1+\lambda \right) \left( 1-\lambda \right) ^{3}}{6}+\frac{48\lambda -27}{%
3.2^{6}}\right] \left\vert f^{\prime \prime }(b)\right\vert ^{q}\right) ^{%
\frac{1}{q}}\right\} ,\;\text{for\ }0\leq \lambda \leq \frac{1}{2} \\ 
\frac{\left( b-a\right) ^{2}}{2}\left( \frac{3\lambda -1}{24}\right) ^{1-%
\frac{1}{q}}\left\{ \left( \frac{8\lambda -3}{3.2^{6}}\left\vert f^{\prime
\prime }(a)\right\vert ^{q}+\frac{16\lambda -5}{3.2^{6}}\left\vert f^{\prime
\prime }(b)\right\vert ^{q}\right) ^{\frac{1}{q}}\right. \\ 
\left. +\left( \frac{8\lambda -3}{3.2^{6}}\left\vert f^{\prime \prime
}(b)\right\vert ^{q}+\frac{16\lambda -5}{3.2^{6}}\left\vert f^{\prime \prime
}(a)\right\vert ^{q}\right) ^{\frac{1}{q}}\right\} ,\;\text{for\ }\frac{1}{2}%
\leq \lambda \leq 1%
\end{array}%
\right.
\end{equation*}
\end{theorem}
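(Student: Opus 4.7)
The plan is to follow the standard three-step recipe for bounds of this shape: (i) an integral identity expressing the left-hand side of \eqref{1-1.1} as a weighted integral of $f''$, (ii) the power-mean inequality, (iii) convexity of $|f''|^q$, and then to evaluate the resulting elementary moments of the kernel.

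First I would establish a second-order Montgomery-type identity. Splitting $[a,b]$ at the midpoint and rescaling each half to $[0,1]$ via $x=ta+(1-t)\tfrac{a+b}{2}$ and $x=t\tfrac{a+b}{2}+(1-t)b$, I would integrate by parts twice on each half against a quadratic kernel of the form $k(t)=\tfrac{1}{2}t^{2}-ct$, choosing the constant $c=c(\lambda)$ so that the resulting boundary contributions assemble to exactly $(1-\lambda)f(\tfrac{a+b}{2})+\lambda\tfrac{f(a)+f(b)}{2}-\tfrac{1}{b-a}\int_a^b f$. The outcome is an identity of the schematic form
\begin{equation*}
(1-\lambda)f\!\left(\tfrac{a+b}{2}\right)+\lambda\tfrac{f(a)+f(b)}{2}-\tfrac{1}{b-a}\int_a^b f(x)\,dx
=\tfrac{(b-a)^{2}}{8}\!\int_0^1 k(t)\bigl[f''(u_1(t))+f''(u_2(t))\bigr]dt,
\end{equation*}
with $u_{1}(t),u_{2}(t)$ the two affine maps above. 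This is the workhorse; everything else is bookkeeping.

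Next I would bring absolute values inside, apply the power-mean inequality in the form $\int|k\,g|\le(\int|k|)^{1-1/q}(\int|k|\,|g|^{q})^{1/q}$ to each of the two summands separately, and then use the convexity of $|f''|^{q}$ to bound $|f''(u_i(t))|^{q}$ by a linear convex combination of $|f''(a)|^{q}$, $|f''(b)|^{q}$ (with weights $t$ and $1-t$, different for $u_1$ and $u_2$, which is the reason the two $(\cdot)^{1/q}$ terms in \eqref{1-1.1} have $|f''(a)|$ and $|f''(b)|$ in swapped roles). This reduces the whole estimate to computing the three moments
\begin{equation*}
M_{0}=\int_0^1|k(t)|\,dt,\qquad M_{1}=\int_0^1 t\,|k(t)|\,dt,\qquad M_{2}=\int_0^1(1-t)\,|k(t)|\,dt.
\end{equation*}

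The main obstacle, and the reason for the dichotomy at $\lambda=\tfrac12$, is precisely these moments. When $\lambda\in[\tfrac12,1]$ the kernel $k(t)$ has constant sign on $[0,1]$, so $|k|=\pm k$ and $M_0,M_1,M_2$ are direct polynomial integrals yielding the clean $\tfrac{3\lambda-1}{24}$, $\tfrac{8\lambda-3}{3\cdot 2^{6}}$, $\tfrac{16\lambda-5}{3\cdot 2^{6}}$ appearing in the second branch. When $\lambda\in[0,\tfrac12]$, $k(t)$ has a sign change at some $t^{*}=t^{*}(\lambda)\in(0,1)$, so each $M_{j}$ must be split as $\int_0^{t^*}-\int_{t^*}^1$ of a signed polynomial; the resulting expressions are the more elaborate $\tfrac{\lambda^{3}}{3}+\tfrac{1-3\lambda}{24}$, $\tfrac{\lambda^{4}}{6}+\tfrac{3-8\lambda}{3\cdot 2^{6}}$, $\tfrac{(2-\lambda)\lambda^{3}}{6}+\tfrac{5-16\lambda}{3\cdot 2^{6}}$, and $\tfrac{(1+\lambda)(1-\lambda)^{3}}{6}+\tfrac{48\lambda-27}{3\cdot 2^{6}}$ that appear in the first branch. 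Matching coefficients is mechanical but error-prone; the only non-routine care needed is to keep the two halves $u_1,u_2$ symmetric so that the swap of $f''(a)\leftrightarrow f''(b)$ between the two $(\cdot)^{1/q}$ summands comes out automatically.
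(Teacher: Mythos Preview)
This theorem is not proved in the present paper; it is quoted in the introduction from Sarikaya and Aktan \cite{SA11} as background for the paper's own generalisations. Your three-step outline---a second-order identity obtained by double integration by parts on each half of $[a,b]$, then the power-mean inequality, then convexity of $|f''|^{q}$, followed by evaluation of the kernel moments $M_0,M_1,M_2$ with a sign-change dichotomy at $\lambda=\tfrac12$---is correct and is exactly the standard recipe for results of this type.

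The paper's own machinery (Lemma~\ref{2.1} and Theorem~\ref{2.1.1}) runs essentially the same argument in a more general setting. Specialising to $\kappa=m=1$ and $x=\tfrac{a+b}{2}$, Lemma~\ref{2.1} gives precisely the kind of identity you sketch (with kernel $t(2\lambda-t)/2$ on each half, i.e.\ your $k(t)$ with $c=\lambda$ up to sign), and the functions $\varphi_{1},\varphi_{2},\varphi_{3}$ computed in the proof of Theorem~\ref{2.1.1} are exactly your moments $M_{0},M_{1},M_{2}$, with the same case split. The one substantive difference is in how convexity is deployed: you write $|f''(u_i(t))|^{q}$ as a convex combination of the endpoint values $|f''(a)|^{q}$ and $|f''(b)|^{q}$, which reproduces the quoted theorem verbatim; the paper instead bounds it by a combination of $|f''(\tfrac{a+b}{2})|^{q}$ and a single endpoint value. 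This is why the Remark following Corollary~\ref{2.a} obtains an inequality it describes as sharper than \eqref{1-1.1}, with the midpoint second derivative appearing in place of one of the endpoint terms.
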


Let us recall the following special functions:

(1) The Beta function:%
\begin{equation*}
\beta \left( x,y\right) =\frac{\Gamma (x)\Gamma (y)}{\Gamma (x+y)}%
=\dint\limits_{0}^{1}t^{x-1}\left( 1-t\right) ^{y-1}dt,\ \ x,y>0,
\end{equation*}

(2) The incomplete Beta function: 
\begin{equation*}
\beta \left( a,x,y\right) =\dint\limits_{0}^{a}t^{x-1}\left( 1-t\right)
^{y-1}dt,\ \ 0<a<1,\ x,y>0,
\end{equation*}

(3) The hypergeometric function:%
\begin{equation*}
_{2}F_{1}\left( a,b;c;z\right) =\frac{1}{\beta \left( b,c-b\right) }%
\dint\limits_{0}^{1}t^{b-1}\left( 1-t\right) ^{c-b-1}\left( 1-zt\right)
^{-a}dt,\ c>b>0,\ \left\vert z\right\vert <1\text{ (see \cite{AS65}).}
\end{equation*}

The main aim of this article is to establish new generalization of Hermite
Hadamard-type and Simpson-type inequalities for functions whose second
derivatives in absolutely value at certain powers are $(\alpha ,m)-$convex.
To begin with, the author derives a general integral identity for twice
differentiable mappings. By using this integral equality, the author
establishes some new inequalities of the Simpson-like and the
Hermite-Hadamard-like type for these functions.\ 

\section{Main Results}

Let $f:I\subseteq 
\mathbb{R}
\rightarrow 
\mathbb{R}
$ be a differentiable function on $I^{\circ }$, the interior of $I$,
throughout this section we will take%
\begin{eqnarray*}
&&I_{f}\left( x,\lambda ,\kappa ;a,mb\right) \\
&=&\left( 1-\lambda \right) \left[ \frac{\left( x-a\right) ^{\kappa }+\left(
mb-x\right) ^{\kappa }}{mb-a}\right] f(x)+\lambda \left[ \frac{\left(
x-a\right) ^{\kappa }f(a)+\left( mb-x\right) ^{\kappa }f(mb)}{mb-a}\right] \\
&&+\left( \frac{1}{\kappa +1}-\lambda \right) \left[ \frac{\left(
mb-x\right) ^{\kappa +1}-\left( x-a\right) ^{\kappa +1}}{mb-a}\right]
f^{\prime }(x)-\frac{\Gamma \left( \alpha +1\right) }{mb-a}\left[
J_{x^{-}}^{\kappa }f(a)+J_{x^{+}}^{\kappa }f(mb)\right]
\end{eqnarray*}%
where $a,b\in I$ and $m\in \left( 0,1\right] $ with $a<mb$, $\ x\in \lbrack
a,mb]$ , $\lambda \in \left[ 0,1\right] $, $\kappa >0$ and $\Gamma $ is
Euler Gamma function. In order to prove our main results we need the
following identity.

\begin{lemma}
\label{2.1}Let $f:I\subseteq 
\mathbb{R}
\rightarrow 
\mathbb{R}
$ be a twice differentiable function on $I^{\circ }$ such that $f^{\prime
\prime }\in L[a,b]$, where $a,b\in I$ with $a<b$. Then for all $x\in \lbrack
a,mb]$ ,$m\in \left( 0,1\right] $, $\lambda \in \left[ 0,1\right] $ and $%
\alpha >0$ we have:%
\begin{eqnarray}
&&I_{f}\left( x,\lambda ,\kappa ;a,mb\right) =\frac{\left( x-a\right)
^{\kappa +2}}{\left( \kappa +1\right) \left( mb-a\right) }%
\dint\limits_{0}^{1}t\left( \left( \kappa +1\right) \lambda -t^{\kappa
}\right) f^{\prime \prime }\left( tx+\left( 1-t\right) a\right) dt
\label{2-1} \\
&&+\frac{\left( mb-x\right) ^{\kappa +2}}{\left( \kappa +1\right) \left(
mb-a\right) }\dint\limits_{0}^{1}t\left( \left( \kappa +1\right) \lambda
-t^{\kappa }\right) f^{\prime \prime }\left( tx+m\left( 1-t\right) b\right)
dt.  \notag
\end{eqnarray}
\end{lemma}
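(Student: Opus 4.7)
The plan is to prove the identity by integrating by parts twice on each of the two integrals on the right-hand side of (\ref{2-1}), then converting the resulting single integrals of $f$ into Riemann--Liouville fractional integrals via a linear change of variable. I will treat the two integrals separately and add the weighted results at the end. Note that no $(\alpha,m)$-convexity is needed for the identity itself; only the integrability of $f''$ is required.

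For the first integral
\[
J_{1}=\int_{0}^{1}t\bigl((\kappa +1)\lambda -t^{\kappa }\bigr)f^{\prime \prime }\bigl(tx+(1-t)a\bigr)\,dt,
\]
set $p(t)=(\kappa +1)\lambda t-t^{\kappa +1}$, which satisfies $p(0)=0$, $p(1)=(\kappa +1)\lambda -1$, and $p^{\prime }(t)=(\kappa +1)(\lambda -t^{\kappa })$. Integration by parts with antiderivative $f^{\prime }(tx+(1-t)a)/(x-a)$ leaves only a boundary term at $t=1$ involving $f^{\prime }(x)$, together with the integral $-\frac{\kappa +1}{x-a}\int_{0}^{1}(\lambda -t^{\kappa })f^{\prime }(tx+(1-t)a)\,dt$. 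A second integration by parts on this integral yields boundary terms in $f(x)$ and $f(a)$ and leaves the residual $\int_{0}^{1}t^{\kappa -1}f(tx+(1-t)a)\,dt$. The substitution $s=tx+(1-t)a$ rewrites this residual as $\frac{\Gamma(\kappa )}{(x-a)^{\kappa }}J_{x^{-}}^{\kappa }f(a)$, and the outer prefactor $\kappa$ combines with $\Gamma(\kappa)$ to give $\Gamma(\kappa +1)$.

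The same sequence of operations applied to
\[
J_{2}=\int_{0}^{1}t\bigl((\kappa +1)\lambda -t^{\kappa }\bigr)f^{\prime \prime }\bigl(tx+m(1-t)b\bigr)\,dt
\]
requires using the antiderivative $-f^{\prime }(tx+m(1-t)b)/(mb-x)$, since $\partial _{t}[tx+m(1-t)b]=x-mb<0$, and the substitution $s=tx+m(1-t)b$ at the end, producing $\frac{\Gamma(\kappa )}{(mb-x)^{\kappa }}J_{x^{+}}^{\kappa }f(mb)$. After multiplying $J_{1}$ by $(x-a)^{\kappa +2}/[(\kappa +1)(mb-a)]$ and $J_{2}$ by $(mb-x)^{\kappa +2}/[(\kappa +1)(mb-a)]$ and summing, the $f^{\prime }(x)$ contributions (which carry opposite signs due to the orientation flip) combine into $\bigl(\tfrac{1}{\kappa +1}-\lambda \bigr)\frac{(mb-x)^{\kappa +1}-(x-a)^{\kappa +1}}{mb-a}\,f^{\prime }(x)$, the $f(x)$ contributions combine into $(1-\lambda )\frac{(x-a)^{\kappa }+(mb-x)^{\kappa }}{mb-a}\,f(x)$, the $f(a)$ and $f(mb)$ terms acquire coefficients $\lambda (x-a)^{\kappa }/(mb-a)$ and $\lambda (mb-x)^{\kappa }/(mb-a)$, and the two fractional integrals combine into $-\frac{\Gamma(\kappa +1)}{mb-a}\bigl[J_{x^{-}}^{\kappa }f(a)+J_{x^{+}}^{\kappa }f(mb)\bigr]$, matching $I_{f}(x,\lambda ,\kappa ;a,mb)$ exactly.

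The main obstacle will be bookkeeping: tracking the sign changes in the integration by parts on $J_{2}$, where the affine argument decreases in $t$, and collecting the many coefficients so that they align with the definition of $I_{f}$. The argument itself is a routine but careful double integration by parts.
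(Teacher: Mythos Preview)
Your proposal is correct and follows precisely the route the paper itself indicates: the paper merely states that the identity follows by integration by parts on the right-hand integrals together with a change of variable, leaving the details to the reader. Your plan of integrating by parts twice on each of $J_{1}$ and $J_{2}$, then substituting $s=tx+(1-t)a$ (respectively $s=tx+m(1-t)b$) to recover $J_{x^{-}}^{\kappa}f(a)$ and $J_{x^{+}}^{\kappa}f(mb)$, and finally summing with the stated weights, is exactly this outline carried out in full, and the coefficient bookkeeping you describe (including the sign flip in $J_{2}$ and the identity $\kappa\,\Gamma(\kappa)=\Gamma(\kappa+1)$) checks out.
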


A simple proof of the equality can be done by performing an integration by
parts in the integrals from the right side and changing the variable. The
details are left to the interested reader.

\begin{theorem}
\label{2.1.1}Let $f:$ $I\subset \lbrack 0,\infty )\rightarrow 
\mathbb{R}
$ be twice differentiable function on $I^{\circ }$ such that $f^{\prime
\prime }\in L[a,b]$, where $a/m,b\in I^{\circ }$ with $a<mb$. If $|f^{\prime
\prime }|^{q}$ is $(\alpha ,m)-$convex on $[a,b]$ for some fixed $\alpha \in %
\left[ 0,1\right] ,$ $m\in \left( 0,1\right] $ and$\ q\geq 1$, then for $%
x\in \lbrack a,mb]$, $\lambda \in \left[ 0,1\right] $ and $\kappa >0$ the
following inequality for fractional integrals holds%
\begin{equation}
\left\vert I_{f}\left( x,\lambda ,\kappa ;a,mb\right) \right\vert
\label{2-2}
\end{equation}%
\begin{eqnarray*}
&\leq &\varphi _{1}^{1-\frac{1}{q}}\left( \kappa ,\lambda \right) \left\{ 
\frac{\left( x-a\right) ^{\kappa +2}}{\left( \kappa +1\right) \left(
mb-a\right) }\left( \left\vert f^{\prime \prime }\left( x\right) \right\vert
^{q}\varphi _{2}\left( \kappa ,\lambda ,\alpha \right) +m\left\vert
f^{\prime \prime }\left( \frac{a}{m}\right) \right\vert ^{q}\varphi
_{3}\left( \kappa ,\lambda ,\alpha \right) \right) ^{\frac{1}{q}}\right. \\
&&+\left. \frac{\left( mb-x\right) ^{\kappa +2}}{\left( \kappa +1\right)
\left( mb-a\right) }\left( \left\vert f^{\prime \prime }\left( x\right)
\right\vert ^{q}\varphi _{2}\left( \kappa ,\lambda ,\alpha \right)
+m\left\vert f^{\prime \prime }\left( b\right) \right\vert ^{q}\varphi
_{3}\left( \kappa ,\lambda ,\alpha \right) \right) ^{\frac{1}{q}}\right\} ,
\end{eqnarray*}%
where 
\begin{eqnarray*}
\varphi _{1}\left( \kappa ,\lambda \right) &=&\left\{ 
\begin{array}{cc}
\frac{\kappa \left[ \left( \kappa +1\right) \lambda \right] ^{\frac{\kappa +2%
}{\kappa }}}{\kappa +2}-\frac{\left( \kappa +1\right) \lambda }{2}+\frac{1}{%
\kappa +2}, & 0\leq \lambda \leq \frac{1}{\kappa +1} \\ 
\frac{\left( \kappa +1\right) \lambda }{2}-\frac{1}{\kappa +2}, & \frac{1}{%
\kappa +1}<\lambda \leq 1%
\end{array}%
\right. , \\
\varphi _{2}\left( \kappa ,\lambda ,\alpha \right) &=&\left\{ 
\begin{array}{cc}
\frac{2\kappa \left[ \left( \kappa +1\right) \lambda \right] ^{\frac{\kappa
+\alpha +2}{\kappa }}}{\left( \alpha +2\right) \left( \kappa +\alpha
+2\right) }-\frac{\left( \kappa +1\right) \lambda }{\alpha +2}+\frac{1}{%
\kappa +\alpha +2}, & 0\leq \lambda \leq \frac{1}{\kappa +1} \\ 
\frac{\left( \kappa +1\right) \lambda }{\alpha +2}-\frac{1}{\kappa +\alpha +2%
}, & \frac{1}{\kappa +1}<\lambda \leq 1%
\end{array}%
\right. , \\
\varphi _{3}\left( \kappa ,\lambda ,\alpha \right) &=&\left\{ 
\begin{array}{cc}
\frac{\kappa \left[ \left( \kappa +1\right) \lambda \right] ^{\frac{\kappa +2%
}{\kappa }}}{\left( \kappa +2\right) }-\frac{2\kappa \left[ \left( \kappa
+1\right) \lambda \right] ^{\frac{\kappa +\alpha +2}{\kappa }}}{\left(
\alpha +2\right) \left( \kappa +\alpha +2\right) }-\frac{\alpha \left(
\kappa +1\right) \lambda }{2(\alpha +2)}+\frac{\kappa }{\left( \kappa
+2\right) \left( \kappa +\alpha +2\right) }, & 0\leq \lambda \leq \frac{1}{%
\kappa +1} \\ 
\frac{\alpha \left( \kappa +1\right) \lambda }{2(\alpha +2)}-\frac{\kappa }{%
\left( \kappa +2\right) \left( \kappa +\alpha +2\right) }, & \frac{1}{\kappa
+1}<\lambda \leq 1%
\end{array}%
\right. .
\end{eqnarray*}
\end{theorem}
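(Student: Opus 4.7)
The plan is to start from the identity in Lemma~\ref{2.1}, take absolute values inside the two integrals, apply the power-mean inequality with weight $t|(\kappa+1)\lambda - t^\kappa|$, then use the $(\alpha,m)$-convexity of $|f''|^q$ to bound the integrands, and finally evaluate the three resulting weighted moments by a case split on the sign of the kernel $(\kappa+1)\lambda - t^\kappa$.

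First I would apply the triangle inequality to \eqref{2-1}, obtaining
\begin{equation*}
|I_f(x,\lambda,\kappa;a,mb)| \leq \frac{(x-a)^{\kappa+2}}{(\kappa+1)(mb-a)}\,J_1 + \frac{(mb-x)^{\kappa+2}}{(\kappa+1)(mb-a)}\,J_2,
\end{equation*}
where $J_1 = \int_0^1 t|(\kappa+1)\lambda - t^\kappa|\,|f''(tx+(1-t)a)|\,dt$ and $J_2$ is the analogous integral with $f''(tx+m(1-t)b)$. Then the power-mean inequality with exponent $q\geq 1$ applied to each $J_i$ with weight $t|(\kappa+1)\lambda - t^\kappa|$ yields a factor $\varphi_1(\kappa,\lambda)^{1-1/q}$ together with a $q$-th power integral to be estimated by convexity.

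Next I would invoke the $(\alpha,m)$-convexity of $|f''|^q$. Since $a = m\cdot(a/m)$, we may write $tx + (1-t)a = tx + m(1-t)(a/m)$, so
\begin{equation*}
|f''(tx+(1-t)a)|^q \leq t^\alpha |f''(x)|^q + m(1-t^\alpha)|f''(a/m)|^q,
\end{equation*}
and similarly $|f''(tx+m(1-t)b)|^q \leq t^\alpha |f''(x)|^q + m(1-t^\alpha)|f''(b)|^q$. Substituting and integrating reduces the problem to computing
\begin{equation*}
\varphi_1(\kappa,\lambda) = \int_0^1 t|(\kappa+1)\lambda - t^\kappa|\,dt, \qquad \varphi_2(\kappa,\lambda,\alpha) = \int_0^1 t^{\alpha+1}|(\kappa+1)\lambda - t^\kappa|\,dt,
\end{equation*}
\begin{equation*}
\varphi_3(\kappa,\lambda,\alpha) = \int_0^1 t(1-t^\alpha)|(\kappa+1)\lambda - t^\kappa|\,dt = \varphi_1(\kappa,\lambda) - \varphi_2(\kappa,\lambda,\alpha),
\end{equation*}
which gives the claimed bound once these integrals match the stated closed forms.

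The final and most delicate step is to evaluate $\varphi_1, \varphi_2, \varphi_3$ explicitly. The kernel $(\kappa+1)\lambda - t^\kappa$ vanishes on $[0,1]$ precisely at $t_0 = \big((\kappa+1)\lambda\big)^{1/\kappa}$, and $t_0 \leq 1$ iff $\lambda \leq 1/(\kappa+1)$. For $\lambda > 1/(\kappa+1)$ the kernel keeps a constant sign and each integral is a direct combination of $\int_0^1 t^{\beta}\,dt$ and $\int_0^1 t^{\beta+\kappa}\,dt$ for $\beta \in \{1,\alpha+1\}$, giving the second branches of the $\varphi_i$. For $0 \leq \lambda \leq 1/(\kappa+1)$, I would split each integral at $t_0$ and use the identity $t_0^\kappa = (\kappa+1)\lambda$, which converts the boundary terms $t_0^{\kappa+2}$ and $t_0^{\kappa+\alpha+2}$ into the fractional powers $\big((\kappa+1)\lambda\big)^{(\kappa+2)/\kappa}$ and $\big((\kappa+1)\lambda\big)^{(\kappa+\alpha+2)/\kappa}$ appearing in the first branches of $\varphi_1$ and $\varphi_2$; the formula for $\varphi_3$ then follows by subtraction, providing a useful cross-check. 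The main obstacle is precisely this bookkeeping: handling the sign change at $t_0$ uniformly in $\kappa$ and $\alpha$ while keeping track of the various powers so that the answers reduce to the compact closed forms stated in the theorem.
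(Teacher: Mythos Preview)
Your proposal is correct and follows essentially the same route as the paper: start from Lemma~\ref{2.1}, apply the triangle inequality and the power-mean inequality with weight $t|(\kappa+1)\lambda-t^\kappa|$, use $(\alpha,m)$-convexity of $|f''|^q$ (writing $a=m\cdot(a/m)$), and then evaluate the three weighted integrals $\varphi_1,\varphi_2,\varphi_3$ by splitting at $t_0=((\kappa+1)\lambda)^{1/\kappa}$ when $\lambda\le 1/(\kappa+1)$. Your observation that $\varphi_3=\varphi_1-\varphi_2$ is a tidy cross-check; the paper instead computes $\varphi_3$ directly, but otherwise the arguments coincide.
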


\begin{proof}
From Lemma \ref{2.1}, using the property of the modulus and the power-mean
inequality we have%
\begin{eqnarray}
&&\left\vert I_{f}\left( x,\lambda ,\kappa ,a,mb\right) \right\vert \leq 
\frac{\left( x-a\right) ^{\kappa +2}}{\left( \kappa +1\right) \left(
mb-a\right) }\dint\limits_{0}^{1}t\left\vert \left( \kappa +1\right) \lambda
-t^{\kappa }\right\vert \left\vert f^{\prime \prime }\left( tx+\left(
1-t\right) a\right) \right\vert dt  \notag \\
&&+\frac{\left( mb-x\right) ^{\kappa +2}}{\left( \kappa +1\right) \left(
mb-a\right) }\dint\limits_{0}^{1}t\left\vert \left( \kappa +1\right) \lambda
-t^{\kappa }\right\vert \left\vert f^{\prime \prime }\left( tx+m\left(
1-t\right) b\right) \right\vert dt  \notag \\
&\leq &\frac{\left( x-a\right) ^{\kappa +2}}{\left( \kappa +1\right) \left(
mb-a\right) }\left( \dint\limits_{0}^{1}t\left\vert \left( \kappa +1\right)
\lambda -t^{\kappa }\right\vert dt\right) ^{1-\frac{1}{q}}  \notag \\
&&\times \left( \dint\limits_{0}^{1}t\left\vert \left( \kappa +1\right)
\lambda -t^{\kappa }\right\vert \left\vert f^{\prime \prime }\left(
tx+\left( 1-t\right) a\right) \right\vert ^{q}dt\right) ^{\frac{1}{q}} 
\notag \\
&&+\frac{\left( mb-x\right) ^{\kappa +2}}{\left( \kappa +1\right) \left(
mb-a\right) }\left( \dint\limits_{0}^{1}t\left\vert \left( \kappa +1\right)
\lambda -t^{\kappa }\right\vert dt\right) ^{1-\frac{1}{q}}  \notag \\
&&\times \left( \dint\limits_{0}^{1}t\left\vert \left( \kappa +1\right)
\lambda -t^{\kappa }\right\vert \left\vert f^{\prime \prime }\left(
tx+\left( 1-t\right) mb\right) \right\vert ^{q}dt\right) ^{\frac{1}{q}}.
\label{2-2a}
\end{eqnarray}%
Since $\left\vert f^{\prime \prime }\right\vert ^{q}$ is $\left( \alpha
,m\right) -$convex on $[a,b]$ we get%
\begin{eqnarray}
\dint\limits_{0}^{1}t\left\vert \left( \kappa +1\right) \lambda -t^{\kappa
}\right\vert \left\vert f^{\prime \prime }\left( tx+\left( 1-t\right)
a\right) \right\vert ^{q}dt &\leq &\dint\limits_{0}^{1}t\left\vert \left(
\kappa +1\right) \lambda -t^{\kappa }\right\vert \left( t^{\alpha
}\left\vert f^{\prime \prime }\left( x\right) \right\vert ^{q}+m\left(
1-t\alpha \right) \left\vert f^{\prime \prime }\left( \frac{a}{m}\right)
\right\vert ^{q}\right) dt  \notag \\
&=&\left\vert f^{\prime \prime }\left( x\right) \right\vert ^{q}\varphi
_{2}\left( \kappa ,\lambda ,\alpha \right) +m\left\vert f^{\prime \prime
}\left( \frac{a}{m}\right) \right\vert ^{q}\varphi _{3}\left( \kappa
,\lambda ,\alpha \right) ,  \label{2-2b}
\end{eqnarray}%
\begin{eqnarray}
\dint\limits_{0}^{1}t\left\vert \left( \kappa +1\right) \lambda -t^{\kappa
}\right\vert \left\vert f^{\prime \prime }\left( tx+m\left( 1-t\right)
b\right) \right\vert ^{q}dt &\leq &\dint\limits_{0}^{1}t\left\vert \left(
\kappa +1\right) \lambda -t^{\kappa }\right\vert \left( t^{\alpha
}\left\vert f^{\prime \prime }\left( x\right) \right\vert ^{q}+m\left(
1-t^{\alpha }\right) \left\vert f^{\prime \prime }\left( b\right)
\right\vert ^{q}\right) dt  \notag \\
&=&\left\vert f^{\prime \prime }\left( x\right) \right\vert ^{q}\varphi
_{2}\left( \kappa ,\lambda ,\alpha \right) +m\left\vert f^{\prime \prime
}\left( b\right) \right\vert ^{q}\varphi _{3}\left( \kappa ,\lambda ,\alpha
\right) ,  \label{2-2c}
\end{eqnarray}%
where we use the fact that%
\begin{eqnarray*}
&&\varphi _{3}\left( \kappa ,\lambda ,\alpha \right) \\
&=&\dint\limits_{0}^{1}t\left\vert \left( \kappa +1\right) \lambda
-t^{\kappa }\right\vert \left( 1-t^{\alpha }\right) dt \\
&=&\left\{ 
\begin{array}{cc}
\begin{array}{c}
\left( \kappa +1\right) \lambda \dint\limits_{0}^{\left[ \left( \kappa
+1\right) \lambda \right] ^{\frac{1}{\kappa }}}t\left( 1-t^{\alpha }\right)
dt-\dint\limits_{0}^{\left[ \left( \kappa +1\right) \lambda \right] ^{\frac{1%
}{\kappa }}}t^{\kappa +1}\left( 1-t^{\alpha }\right) dt \\ 
-\left( \kappa +1\right) \lambda \dint\limits_{\left[ \left( \kappa
+1\right) \lambda \right] ^{\frac{1}{\kappa }}}^{1}t\left( 1-t^{\alpha
}\right) dt+\dint\limits_{\left[ \left( \kappa +1\right) \lambda \right] ^{%
\frac{1}{\kappa }}}^{1}t^{\kappa +1}\left( 1-t^{\alpha }\right) dt%
\end{array}%
, & 0\leq \lambda \leq \frac{1}{\kappa +1} \\ 
\left( \kappa +1\right) \lambda \dint\limits_{0}^{1}t\left( 1-t^{\alpha
}\right) dt-\dint\limits_{0}^{1}t^{\kappa +1}\left( 1-t^{\alpha }\right) dt,
& \frac{1}{\kappa +1}<\lambda \leq 1%
\end{array}%
\right. \\
&=&\left\{ 
\begin{array}{cc}
\frac{\kappa \left[ \left( \kappa +1\right) \lambda \right] ^{\frac{\kappa +2%
}{\kappa }}}{\left( \kappa +2\right) }-\frac{2\kappa \left[ \left( \kappa
+1\right) \lambda \right] ^{\frac{\kappa +\alpha +2}{\kappa }}}{\left(
\alpha +2\right) \left( \kappa +\alpha +2\right) }-\frac{\alpha \left(
\kappa +1\right) \lambda }{2(\alpha +2)}+\frac{\kappa }{\left( \kappa
+2\right) \left( \kappa +\alpha +2\right) }, & 0\leq \lambda \leq \frac{1}{%
\kappa +1} \\ 
\frac{\alpha \left( \kappa +1\right) \lambda }{2(\alpha +2)}-\frac{\kappa }{%
\left( \kappa +2\right) \left( \kappa +\alpha +2\right) }, & \frac{1}{\kappa
+1}<\lambda \leq 1%
\end{array}%
\right. ,
\end{eqnarray*}%
\begin{eqnarray*}
&&\varphi _{2}\left( \kappa ,\lambda ,\alpha \right) \\
&=&\dint\limits_{0}^{1}t\left\vert \left( \kappa +1\right) \lambda
-t^{\kappa }\right\vert t^{\alpha }dt \\
&=&\left\{ 
\begin{array}{cc}
\frac{2\kappa \left[ \left( \kappa +1\right) \lambda \right] ^{\frac{\kappa
+\alpha +2}{\kappa }}}{\left( \alpha +2\right) \left( \kappa +\alpha
+2\right) }-\frac{\left( \kappa +1\right) \lambda }{\alpha +2}+\frac{1}{%
\kappa +\alpha +2}, & 0\leq \lambda \leq \frac{1}{\kappa +1} \\ 
\frac{\left( \kappa +1\right) \lambda }{\alpha +2}-\frac{1}{\kappa +\alpha +2%
}, & \frac{1}{\kappa +1}<\lambda \leq 1%
\end{array}%
\right.
\end{eqnarray*}%
and 
\begin{eqnarray}
\varphi _{1}\left( \kappa ,\lambda \right)
&=&\dint\limits_{0}^{1}t\left\vert \left( \kappa +1\right) \lambda
-t^{\kappa }\right\vert dt  \notag \\
&=&\left\{ 
\begin{array}{cc}
\frac{\kappa \left[ \left( \kappa +1\right) \lambda \right] ^{1+\frac{2}{%
\kappa }}}{\kappa +2}-\frac{\left( \kappa +1\right) \lambda }{2}+\frac{1}{%
\kappa +2}, & 0\leq \lambda \leq \frac{1}{\kappa +1} \\ 
\frac{\left( \kappa +1\right) \lambda }{2}-\frac{1}{\kappa +2}, & \frac{1}{%
\kappa +1}<\lambda \leq 1%
\end{array}%
\right.  \label{2-2d}
\end{eqnarray}%
Hence, If we use (\ref{2-2b}), (\ref{2-2c}) and (\ref{2-2d}) in (\ref{2-2a}%
), we obtain the desired result. This completes the proof.
\end{proof}

\begin{corollary}
\label{2.a}In Theorem \ref{2.1.1},

(a) If we choose $q=1,$ then we get:%
\begin{eqnarray*}
\left\vert I_{f}\left( x,\lambda ,\kappa ;a,mb\right) \right\vert &\leq
&\left\{ \frac{\left( x-a\right) ^{\kappa +1}}{mb-a}\left( \left\vert
f^{\prime \prime }\left( x\right) \right\vert \varphi _{2}\left( \kappa
,\lambda ,\alpha \right) +m\left\vert f^{\prime \prime }\left( \frac{a}{m}%
\right) \right\vert \varphi _{3}\left( \kappa ,\lambda ,\alpha \right)
\right) \right. \\
&&+\left. \frac{\left( mb-x\right) ^{\kappa +1}}{mb-a}\left( \left\vert
f^{\prime \prime }\left( x\right) \right\vert \varphi _{2}\left( \kappa
,\lambda ,\alpha \right) +m\left\vert f^{\prime \prime }\left( b\right)
\right\vert \varphi _{3}\left( \kappa ,\lambda ,\alpha \right) \right)
\right\} .
\end{eqnarray*}

(b) If we choose $x=\frac{a+mb}{2},$ then we get:%
\begin{eqnarray}
&&\left\vert \frac{2^{\kappa -1}}{\left( mb-a\right) ^{\kappa -1}}%
I_{f}\left( \frac{a+mb}{2},\lambda ,\kappa ;a,mb\right) \right\vert  \notag
\\
&=&\left\vert \left( 1-\lambda \right) f\left( \frac{a+mb}{2}\right)
+\lambda \left( \frac{f(a)+f(mb)}{2}\right) -\frac{\Gamma \left( \kappa
+1\right) 2^{\kappa -1}}{\left( mb-a\right) ^{\kappa }}\left[ J_{\left( 
\frac{a+mb}{2}\right) ^{-}}^{\kappa }f(a)+J_{\left( \frac{a+mb}{2}\right)
^{+}}^{\kappa }f(mb)\right] \right\vert  \notag
\end{eqnarray}%
\begin{equation*}
\leq \frac{\left( mb-a\right) ^{2}}{8\left( \kappa +1\right) }\varphi
_{1}^{1-\frac{1}{q}}\left( \kappa ,\lambda \right) \left\{ \left( \left\vert
f^{\prime \prime }\left( \frac{a+mb}{2}\right) \right\vert ^{q}\varphi
_{2}\left( \kappa ,\lambda ,\alpha \right) +m\left\vert f^{\prime \prime
}\left( \frac{a}{m}\right) \right\vert ^{q}\varphi _{3}\left( \kappa
,\lambda ,\alpha \right) \right) ^{\frac{1}{q}}\right.
\end{equation*}%
\begin{equation}
\left. +\left( \left\vert f^{\prime \prime }\left( \frac{a+mb}{2}\right)
\right\vert ^{q}\varphi _{2}\left( \kappa ,\lambda ,\alpha \right)
+m\left\vert f^{\prime \prime }\left( b\right) \right\vert ^{q}\varphi
_{3}\left( \kappa ,\lambda ,\alpha \right) \right) ^{\frac{1}{q}}\right\} .
\label{2-2e}
\end{equation}

(c) If we choose $x=\frac{a+mb}{2}$ and $\lambda =\frac{1}{3},$ then we get:%
\begin{eqnarray*}
&&\left\vert \frac{2^{\kappa -1}}{\left( mb-a\right) ^{\kappa -1}}%
I_{f}\left( \frac{a+mb}{2},\frac{1}{3},\kappa ;a,mb\right) \right\vert \\
&=&\left\vert \frac{1}{6}\left[ f(a)+4f\left( \frac{a+mb}{2}\right) +f(mb)%
\right] -\frac{\Gamma \left( \kappa +1\right) 2^{\kappa -1}}{\left(
mb-a\right) ^{\kappa }}\left[ J_{\left( \frac{a+b}{2}\right) ^{-}}^{\kappa
}f(a)+J_{\left( \frac{a+b}{2}\right) ^{+}}^{\kappa }f(mb)\right] \right\vert
\\
&\leq &\frac{\left( mb-a\right) ^{2}}{8\left( \kappa +1\right) }\varphi
_{1}^{1-\frac{1}{q}}\left( \kappa ,\frac{1}{3}\right) \left\{ \left(
\left\vert f^{\prime \prime }\left( \frac{a+mb}{2}\right) \right\vert
^{q}\varphi _{2}\left( \kappa ,\frac{1}{3},\alpha \right) +m\left\vert
f^{\prime \prime }\left( \frac{a}{m}\right) \right\vert ^{q}\varphi
_{3}\left( \kappa ,\frac{1}{3},\alpha \right) \right) ^{\frac{1}{q}}\right.
\\
&&\left. +\left( \left\vert f^{\prime \prime }\left( \frac{a+mb}{2}\right)
\right\vert ^{q}\varphi _{2}\left( \kappa ,\frac{1}{3},\alpha \right)
+m\left\vert f^{\prime \prime }\left( b\right) \right\vert ^{q}\varphi
_{3}\left( \kappa ,\frac{1}{3},\alpha \right) \right) ^{\frac{1}{q}}\right\}
.
\end{eqnarray*}

(d) If we choose $x=\frac{a+mb}{2},\ \lambda =\frac{1}{3},$ and $\kappa =1$,
then we get:%
\begin{eqnarray*}
&&\left\vert I_{f}\left( \frac{a+mb}{2},\frac{1}{3},1,a,mb\right) \right\vert
\\
&=&\left\vert \frac{1}{6}\left[ f(a)+4f\left( \frac{a+mb}{2}\right) +f(mb)%
\right] -\frac{1}{\left( mb-a\right) }\dint\limits_{a}^{b}f(x)dx\right\vert
\\
&\leq &\frac{\left( mb-a\right) ^{2}}{162}\left( \frac{81}{8}\right) ^{\frac{%
1}{q}}\left\{ \left( \left\vert f^{\prime \prime }\left( \frac{a+mb}{2}%
\right) \right\vert ^{q}\varphi _{2}\left( 1,\frac{1}{3},\alpha \right)
+m\left\vert f^{\prime \prime }\left( \frac{a}{m}\right) \right\vert
^{q}\varphi _{3}\left( 1,\frac{1}{3},\alpha \right) \right) ^{\frac{1}{q}%
}\right. \\
&&\left. +\left( \left\vert f^{\prime \prime }\left( \frac{a+mb}{2}\right)
\right\vert ^{q}\varphi _{2}\left( 1,\frac{1}{3},\alpha \right) +m\left\vert
f^{\prime \prime }\left( b\right) \right\vert ^{q}\varphi _{3}\left( 1,\frac{%
1}{3},\alpha \right) \right) ^{\frac{1}{q}}\right\} .
\end{eqnarray*}%
where%
\begin{equation*}
\varphi _{2}\left( 1,\frac{1}{3},\alpha \right) =\frac{2^{\alpha
+4}-23^{\alpha +2}+3^{\alpha +3}(\alpha +2)}{3^{\alpha +3}(\alpha +2)(\alpha
+3)},
\end{equation*}%
\begin{equation*}
\varphi _{3}\left( 1,\frac{1}{3},\alpha \right) =\frac{-2^{\alpha +4}-\alpha
3^{\alpha +2}(\alpha +3)+3^{\alpha +3}(\alpha +2)+83^{\alpha -1}(\alpha
+2)(\alpha +3)}{3^{\alpha +3}(\alpha +2)(\alpha +3)}.
\end{equation*}

(e) If we choose $x=\frac{a+mb}{2}$ and$\ \lambda =0,$ then we get:%
\begin{eqnarray*}
&&\left\vert \frac{2^{\kappa -1}}{\left( mb-a\right) ^{\kappa -1}}%
I_{f}\left( \frac{a+mb}{2},0,\kappa ;a,mb\right) \right\vert \\
&=&\left\vert f\left( \frac{a+mb}{2}\right) -\frac{\Gamma \left( \kappa
+1\right) 2^{\kappa -1}}{\left( mb-a\right) ^{\kappa }}\left[ J_{\left( 
\frac{a+mb}{2}\right) ^{-}}^{\kappa }f(a)+J_{\left( \frac{a+mb}{2}\right)
^{+}}^{\kappa }f(b)\right] \right\vert \\
&\leq &\frac{\left( mb-a\right) ^{2}}{8\left( \kappa +1\right) (\alpha
\kappa +2)}\left( \frac{\kappa +2}{\kappa +s+2}\right) ^{\frac{1}{q}}\left\{ %
\left[ \left\vert f^{\prime \prime }\left( \frac{a+mb}{2}\right) \right\vert
^{q}+\frac{\kappa m\left\vert f^{\prime \prime }\left( \frac{a}{m}\right)
\right\vert ^{q}}{(\kappa +2)}\right] ^{\frac{1}{q}}\right. \\
&&\left. +\left[ \left\vert f^{\prime \prime }\left( \frac{a+mb}{2}\right)
\right\vert ^{q}+\frac{\kappa m\left\vert f^{\prime \prime }\left( b\right)
\right\vert ^{q}}{(\kappa +2)}\right] ^{\frac{1}{q}}\right\} .
\end{eqnarray*}

(f) If we choose $x=\frac{a+mb}{2},\ \lambda =0,$ and $\kappa =1$, then we
get: 
\begin{eqnarray*}
&&\left\vert I_{f}\left( \frac{a+mb}{2},0,1;a,mb\right) \right\vert \\
&=&\left\vert f\left( \frac{a+mb}{2}\right) -\frac{1}{\left( mb-a\right) }%
\dint\limits_{a}^{mb}f(x)dx\right\vert \\
&\leq &\frac{\left( mb-a\right) ^{2}}{48}\left( \frac{1}{s+3}\right) ^{\frac{%
1}{q}}\left\{ \left[ 3\left\vert f^{\prime \prime }\left( \frac{a+mb}{2}%
\right) \right\vert ^{q}+m\left\vert f^{\prime \prime }\left( \frac{a}{m}%
\right) \right\vert ^{q}\right] ^{\frac{1}{q}}\right. \\
&&\left. +\left[ 3\left\vert f^{\prime \prime }\left( \frac{a+mb}{2}\right)
\right\vert ^{q}+m\left\vert f^{\prime \prime }\left( b\right) \right\vert
^{q}\right] ^{\frac{1}{q}}\right\} .
\end{eqnarray*}

(g) If we choose$\ x=\frac{a+mb}{2}$ and $\lambda =1,$ then we get:%
\begin{eqnarray*}
&&\left\vert \frac{2^{\kappa -1}}{\left( mb-a\right) ^{\kappa -1}}%
I_{f}\left( \frac{a+mb}{2},1,\kappa ;a,mb\right) \right\vert \\
&=&\left\vert \frac{f(a)+f(mb)}{2}-\frac{\Gamma \left( \kappa +1\right)
2^{\kappa -1}}{\left( mb-a\right) ^{\kappa }}\left[ J_{\left( \frac{a+mb}{2}%
\right) ^{-}}^{\kappa }f(a)+J_{\left( \frac{a+mb}{2}\right) ^{+}}^{\kappa
}f(mb)\right] \right\vert \\
&\leq &\frac{\left( mb-a\right) ^{2}}{8\left( \kappa +1\right) }\left( \frac{%
\kappa \left( \kappa +3\right) }{2\left( \kappa +2\right) }\right) ^{1-\frac{%
1}{q}} \\
&&\times \left\{ \left[ \frac{\kappa \left( \kappa +\alpha +3\right)
\left\vert f^{\prime \prime }\left( \frac{a+mb}{2}\right) \right\vert ^{q}}{%
\left( \alpha +2\right) \left( \kappa +\alpha +2\right) }+m\left\vert
f^{\prime \prime }\left( \frac{a}{m}\right) \right\vert ^{q}\left( \frac{%
\alpha (\kappa +1)}{2(\alpha +2)}-\frac{\kappa }{(\kappa +2)(\kappa +\alpha
+2)}\right) \right] ^{\frac{1}{q}}\right. \\
&&\left. +\left[ \frac{\kappa \left( \kappa +\alpha +3\right) \left\vert
f^{\prime \prime }\left( \frac{a+mb}{2}\right) \right\vert ^{q}}{\left(
\alpha +2\right) \left( \kappa +\alpha +2\right) }+m\left\vert f^{\prime
\prime }\left( b\right) \right\vert ^{q}\left( \frac{\alpha (\kappa +1)}{%
2(\alpha +2)}-\frac{\kappa }{(\kappa +2)(\kappa +\alpha +2)}\right) \right]
^{\frac{1}{q}}\right\} .
\end{eqnarray*}

(h) If we choose$\ x=\frac{a+mb}{2},\lambda =1$ and $\kappa =1$, then we get:%
\begin{eqnarray*}
&&\left\vert I_{f}\left( \frac{a+b}{2},1,1;a,mb\right) \right\vert \\
&=&\left\vert \frac{f(a)+f(mb)}{2}-\frac{1}{\left( mb-a\right) }%
\dint\limits_{a}^{mb}f(x)dx\right\vert \\
&\leq &\frac{\left( mb-a\right) ^{2}}{16}\left( \frac{2}{3}\right) ^{1-\frac{%
1}{q}} \\
&&\times \left\{ \left[ \frac{\left( \alpha +4\right) \left\vert f^{\prime
\prime }\left( \frac{a+mb}{2}\right) \right\vert ^{q}}{\left( \alpha
+2\right) \left( \alpha +3\right) }+m\left\vert f^{\prime \prime }\left( 
\frac{a}{m}\right) \right\vert ^{q}\frac{3\alpha ^{2}+8\alpha -2}{3(\alpha
+2)(\alpha +3)}\right] ^{\frac{1}{q}}\right. \\
&&\left. +\left[ \frac{\left( \alpha +4\right) \left\vert f^{\prime \prime
}\left( \frac{a+mb}{2}\right) \right\vert ^{q}}{\left( \alpha +2\right)
\left( \alpha +3\right) }+m\left\vert f^{\prime \prime }\left( b\right)
\right\vert ^{q}\frac{3\alpha ^{2}+8\alpha -2}{3(\alpha +2)(\alpha +3)}%
\right] ^{\frac{1}{q}}\right\} .
\end{eqnarray*}
\end{corollary}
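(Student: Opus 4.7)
The plan is that each part (a)--(h) of Corollary~\ref{2.a} is a specialization of Theorem~\ref{2.1.1} obtained by substituting particular values of $q$, $x$, $\lambda$, and/or $\kappa$ into the main inequality \eqref{2-2} and then evaluating $\varphi_1$, $\varphi_2$, $\varphi_3$ at the same parameter values using the explicit formulas supplied in the theorem statement. The work therefore divides cleanly into two routine tasks: (i) absorbing or simplifying the prefactors multiplying the bracketed term of \eqref{2-2}, and (ii) computing $\varphi_1$, $\varphi_2$, $\varphi_3$ at the chosen values, being careful to select the correct branch at the threshold $\lambda = 1/(\kappa+1)$.

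For part (a), setting $q=1$ makes the power-mean exponent $1-1/q$ vanish, so $\varphi_1^{1-1/q} = 1$ and the outer $1/q$-th root disappears, giving the stated inequality directly. For part (b), the symmetric choice $x = (a+mb)/2$ gives $x-a = mb-x = (mb-a)/2$, so both prefactors $(x-a)^{\kappa+2}$ and $(mb-x)^{\kappa+2}$ equal $(mb-a)^{\kappa+2}/2^{\kappa+2}$; multiplying by the normalization $2^{\kappa-1}/(mb-a)^{\kappa-1}$ that is present on the left-hand side of \eqref{2-2e} then produces the prefactor $(mb-a)^{2}/[8(\kappa+1)]$. The corresponding transformation of the Riemann--Liouville terms inside $I_f\bigl(\frac{a+mb}{2},\lambda,\kappa;a,mb\bigr)$ is just the definition of $J^{\kappa}_{((a+mb)/2)^\pm}$ evaluated at $x = (a+mb)/2$, which yields the bracketed sum $J^{\kappa}_{((a+mb)/2)^-}f(a) + J^{\kappa}_{((a+mb)/2)^+}f(mb)$.

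Parts (c), (e), (g) are then obtained from (b) by further fixing $\lambda$ at $1/3$, $0$, and $1$ respectively, and reading off $\varphi_1(\kappa,\lambda)$, $\varphi_2(\kappa,\lambda,\alpha)$, $\varphi_3(\kappa,\lambda,\alpha)$ from the case formulas. Parts (d), (f), (h) additionally set $\kappa = 1$, in which case $\Gamma(\kappa+1) = 1$ and $J^{1}_{a^+}f(x) = \int_a^x f(t)\,dt$, so the fractional-integral term collapses to the ordinary integral $\frac{1}{mb-a}\int_a^{mb} f(x)\,dx$ appearing on the left-hand sides. For $\kappa = 1$ the threshold is $\lambda = 1/2$, so $\lambda = 0$ and $\lambda = 1/3$ fall in the first branch of $\varphi_i$, while $\lambda = 1$ lies in the second; at $\lambda = 0$ the branch formulas simplify further since $((\kappa+1)\lambda)^{(\kappa+2)/\kappa}$ and its sibling vanish.

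The only moderately laborious step is part (d): one must express $\varphi_2(1,1/3,\alpha)$ and $\varphi_3(1,1/3,\alpha)$ in the compact closed forms displayed. The key is to evaluate the offending factor $((\kappa+1)\lambda)^{(\kappa+\alpha+2)/\kappa}$ at $\kappa = 1$, $\lambda = 1/3$, where it equals $(2/3)^{\alpha+3}$, and then to clear the four resulting fractions over the common denominator $3^{\alpha+3}(\alpha+2)(\alpha+3)$. This step is the main friction in the write-up, but it is purely arithmetic; I do not anticipate any genuine conceptual obstacle beyond careful bookkeeping of the exponents $(\kappa+\alpha+2)/\kappa$ and $(\kappa+2)/\kappa$ as they land in the first branch of $\varphi_2$ and $\varphi_3$.
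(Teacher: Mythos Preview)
Your proposal is correct and mirrors the paper's own treatment: the corollary is stated without a separate proof because each part is an immediate substitution into Theorem~\ref{2.1.1}, exactly as you describe. One small caution is that part~(a) as printed carries the exponent $\kappa+1$ and drops the factor $1/(\kappa+1)$ in the prefactor, whereas direct substitution of $q=1$ into \eqref{2-2} yields $(x-a)^{\kappa+2}/[(\kappa+1)(mb-a)]$; this is a typographical slip in the statement rather than a flaw in your argument.
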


\begin{remark}
In \ (b) of Corollary \ref{2.a}, if we choose $\kappa =m=\alpha =1,$ then
the inequality (\ref{2-2e}) reduces to the following inequality which is
better than the inequality (\ref{1-1.1})%
\begin{equation*}
\left\vert \left( 1-\lambda \right) f\left( \frac{a+b}{2}\right) +\lambda
\left( \frac{f(a)+f(b)}{2}\right) -\frac{1}{\left( b-a\right) }%
\dint\limits_{a}^{b}f(x)dx\right\vert
\end{equation*}%
\begin{eqnarray*}
&\leq &\frac{\left( b-a\right) ^{2}}{16}\varphi _{1}^{1-\frac{1}{q}}\left(
1,\lambda \right) \left\{ \left( \left\vert f^{\prime \prime }\left( \frac{%
a+b}{2}\right) \right\vert ^{q}\varphi _{2}\left( 1,\lambda ,1\right)
+\left\vert f^{\prime \prime }\left( a\right) \right\vert ^{q}\varphi
_{3}\left( 1,\lambda ,1\right) \right) ^{\frac{1}{q}}\right. \\
&&+\left. \left( \left\vert f^{\prime \prime }\left( \frac{a+b}{2}\right)
\right\vert ^{q}\varphi _{2}\left( 1,\lambda ,1\right) +\left\vert f^{\prime
\prime }\left( b\right) \right\vert ^{q}\varphi _{3}\left( 1,\lambda
,1\right) \right) ^{\frac{1}{q}}\right\} ,
\end{eqnarray*}%
where%
\begin{equation*}
\varphi _{1}\left( 1,\lambda \right) =\left\{ 
\begin{array}{cc}
8\left( \frac{\lambda ^{3}}{3}+\frac{1-3\lambda }{24}\right) , & \;0\leq
\lambda \leq \frac{1}{2} \\ 
\frac{3\lambda -1}{3}, & \frac{1}{2}<\lambda \leq 1%
\end{array}%
\right. ,
\end{equation*}%
\begin{equation*}
\varphi _{2}\left( 1,\lambda ,1\right) =\left\{ 
\begin{array}{cc}
16\left( \frac{\lambda ^{4}}{6}+\frac{3-8\lambda }{3.2^{6}}\right) , & 0\leq
\lambda \leq \frac{1}{2} \\ 
\frac{8\lambda -3}{12}, & \frac{1}{2}<\lambda \leq 1%
\end{array}%
\right. ,
\end{equation*}%
\begin{equation*}
\varphi _{3}\left( 1,\lambda ,1\right) =\left\{ 
\begin{array}{cc}
\frac{-8\lambda ^{4}+8\lambda ^{3}-\lambda }{3}+\frac{1}{12}, & 0\leq
\lambda \leq \frac{1}{2} \\ 
\frac{4\lambda -1}{12}, & \frac{1}{2}<\lambda \leq 1%
\end{array}%
\right. .
\end{equation*}
\end{remark}

\begin{theorem}
\label{2.2}Let $f:$ $I\subset \lbrack 0,\infty )\rightarrow 
\mathbb{R}
$ be twice differentiable function on $I^{\circ }$ such that $f^{\prime
\prime }\in L[a,b]$, where $a/m,b\in I^{\circ }$ with $a<mb$. If $|f^{\prime
\prime }|^{q}$ is $(\alpha ,m)-$convex on $[a,b]$ for some fixed $\alpha \in %
\left[ 0,1\right] ,\;m\in \left( 0,1\right] $ and $q>1$, then for $x\in
\lbrack a,mb]$, $\lambda \in \left[ 0,1\right] $ and $\kappa >0$ the
following inequality for fractional integrals holds%
\begin{eqnarray}
&&\left\vert I_{f}\left( x,\lambda ,\kappa ;a,mb\right) \right\vert
\label{2-3} \\
&\leq &\varphi _{4}^{\frac{1}{p}}\left( \kappa ,\lambda ,p\right) \left\{ 
\frac{\left( x-a\right) ^{\kappa +2}}{\left( \kappa +1\right) \left(
mb-a\right) }\left( \frac{\left\vert f^{\prime \prime }\left( x\right)
\right\vert ^{q}+\alpha m\left\vert f^{\prime \prime }\left( \frac{a}{m}%
\right) \right\vert ^{q}}{\alpha +1}\right) ^{\frac{1}{q}}\right.  \notag \\
&&+\left. \frac{\left( mb-x\right) ^{\kappa +2}}{\left( \kappa +1\right)
\left( mb-a\right) }\left( \frac{\left\vert f^{\prime \prime }\left(
x\right) \right\vert ^{q}+\alpha m\left\vert f^{\prime \prime }\left(
b\right) \right\vert ^{q}}{\alpha +1}\right) ^{\frac{1}{q}}\right\} ,  \notag
\end{eqnarray}%
where $p=\frac{q}{q-1}$ and 
\begin{eqnarray*}
&&\varphi _{4}\left( \kappa ,\lambda ,p\right) \\
&=&\left\{ 
\begin{array}{cc}
\frac{1}{p\left( \kappa +1\right) +1}, & \lambda =0 \\ 
\left[ 
\begin{array}{c}
\frac{\left[ \left( \kappa +1\right) \lambda \right] ^{\frac{1+\left( \kappa
+1\right) p}{\kappa }}}{\kappa }\beta \left( \frac{1+p}{\kappa },1+p\right)
\\ 
+\frac{\left[ 1-\left( \kappa +1\right) \lambda \right] ^{p+1}}{p+1}%
._{2}F_{1}\left( 1-\frac{1+p}{\kappa },1;p+2;1-\left( \kappa +1\right)
\lambda \right)%
\end{array}%
\right] , & 0<\lambda <\frac{1}{\kappa +1} \\ 
\frac{\left[ \left( \kappa +1\right) \lambda \right] ^{\frac{p\left( \kappa
+1\right) +1}{\kappa }}}{\kappa }\beta \left( \frac{1}{\left( \kappa
+1\right) \lambda };\frac{1+p}{\kappa },1+p\right) , & \frac{1}{\kappa +1}%
\leq \lambda \leq 1%
\end{array}%
\right. .
\end{eqnarray*}
\end{theorem}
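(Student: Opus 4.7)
The plan is to parallel the proof of Theorem \ref{2.1.1}, replacing the power-mean inequality with Hölder's inequality, which is the natural tool when $q>1$ and we want to separate the weight $t|(\kappa+1)\lambda-t^{\kappa}|$ from $|f''|$ raised to the $q$-th power.

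First I would start from the identity in Lemma \ref{2.1} and apply the triangle inequality to pass absolute values inside both integrals, obtaining
\begin{equation*}
\left\vert I_{f}(x,\lambda,\kappa;a,mb)\right\vert \leq \frac{(x-a)^{\kappa+2}}{(\kappa+1)(mb-a)}\int_{0}^{1}t\left\vert (\kappa+1)\lambda-t^{\kappa}\right\vert \left\vert f''(tx+(1-t)a)\right\vert dt + (\text{analogous term for } mb).
\end{equation*}
Next I would apply Hölder's inequality with exponents $p=q/(q-1)$ and $q$, grouping the full weight $t\left\vert(\kappa+1)\lambda-t^{\kappa}\right\vert$ into the $p$-factor, so that
\begin{equation*}
\int_{0}^{1}t\left\vert(\kappa+1)\lambda-t^{\kappa}\right\vert \left\vert f''(tx+(1-t)a)\right\vert dt \leq \left(\int_{0}^{1}t^{p}\left\vert(\kappa+1)\lambda-t^{\kappa}\right\vert^{p}dt\right)^{1/p}\left(\int_{0}^{1}\left\vert f''(tx+(1-t)a)\right\vert^{q}dt\right)^{1/q}.
\end{equation*}

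The second factor is then handled by $(\alpha,m)$-convexity of $|f''|^{q}$: bounding $|f''(tx+(1-t)a)|^{q}\leq t^{\alpha}|f''(x)|^{q}+m(1-t^{\alpha})|f''(a/m)|^{q}$ and integrating against $dt$ gives $(|f''(x)|^{q}+\alpha m|f''(a/m)|^{q})/(\alpha+1)$, since $\int_{0}^{1}t^{\alpha}dt=1/(\alpha+1)$ and $\int_{0}^{1}(1-t^{\alpha})dt=\alpha/(\alpha+1)$. The same bound holds for the $mb$-integral with $|f''(a/m)|$ replaced by $|f''(b)|$.

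The remaining, and principal, computational task is evaluating
\begin{equation*}
\varphi_{4}(\kappa,\lambda,p)=\int_{0}^{1}t^{p}\left\vert(\kappa+1)\lambda-t^{\kappa}\right\vert^{p}dt.
\end{equation*}
This is where the three cases in the definition of $\varphi_{4}$ arise. For $\lambda=0$ the integrand reduces to $t^{p(\kappa+1)}$ and integrates to $1/(p(\kappa+1)+1)$. For $\frac{1}{\kappa+1}\leq\lambda\leq 1$ the expression $(\kappa+1)\lambda-t^{\kappa}$ stays nonnegative on $[0,1]$, so the absolute value disappears; the substitution $u=t^{\kappa}/((\kappa+1)\lambda)$ converts the integral into an incomplete Beta function. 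For $0<\lambda<\frac{1}{\kappa+1}$ the integrand changes sign at $t_{0}=[(\kappa+1)\lambda]^{1/\kappa}$, so I would split at $t_{0}$: on $[0,t_{0}]$ the substitution $u=t^{\kappa}/((\kappa+1)\lambda)$ yields the Beta term $\frac{[(\kappa+1)\lambda]^{(1+p(\kappa+1))/\kappa}}{\kappa}\beta(\frac{1+p}{\kappa},1+p)$, while on $[t_{0},1]$ the substitution $s=(t^{\kappa}-(\kappa+1)\lambda)/(1-(\kappa+1)\lambda)$ (together with the Euler integral representation of $_{2}F_{1}$) produces the hypergeometric term. This case-splitting together with recognizing the special-function representations is the main obstacle; after it, assembling the three pieces via the Hölder bound completes the proof.
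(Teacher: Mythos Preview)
Your proposal is correct and follows essentially the same route as the paper: Lemma \ref{2.1} plus the triangle inequality, then H\"older's inequality with the weight $t|(\kappa+1)\lambda-t^{\kappa}|$ placed entirely in the $p$-factor, then the $(\alpha,m)$-convexity bound on $\int_0^1|f''|^q\,dt$, and finally the case-by-case evaluation of $\int_0^1 t^p|(\kappa+1)\lambda-t^{\kappa}|^p\,dt$. In fact your sketch of the substitutions leading to the Beta and hypergeometric expressions is more explicit than what the paper records.
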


\begin{proof}
From Lemma \ref{2.1}, property of the modulus and using the H\"{o}lder
inequality we have%
\begin{eqnarray*}
&&\left\vert I_{f}\left( x,\lambda ,\kappa ;a,mb\right) \right\vert \leq 
\frac{\left( x-a\right) ^{\kappa +2}}{\left( \kappa +1\right) \left(
mb-a\right) }\dint\limits_{0}^{1}t\left\vert \left( \kappa +1\right) \lambda
-t^{\kappa }\right\vert \left\vert f^{\prime \prime }\left( tx+\left(
1-t\right) a\right) \right\vert dt \\
&&+\frac{\left( mb-x\right) ^{\kappa +2}}{\left( \kappa +1\right) \left(
mb-a\right) }\dint\limits_{0}^{1}t\left\vert \left( \kappa +1\right) \lambda
-t^{\kappa }\right\vert \left\vert f^{\prime \prime }\left( tx+m\left(
1-t\right) b\right) \right\vert dt \\
&\leq &\frac{\left( x-a\right) ^{\kappa +2}}{\left( \kappa +1\right) \left(
mb-a\right) }\left( \dint\limits_{0}^{1}t^{p}\left\vert \left( \kappa
+1\right) \lambda -t^{\kappa }\right\vert ^{p}dt\right) ^{\frac{1}{p}}\left(
\dint\limits_{0}^{1}\left\vert f^{\prime \prime }\left( tx+\left( 1-t\right)
a\right) \right\vert ^{q}dt\right) ^{\frac{1}{q}}
\end{eqnarray*}%
\begin{equation}
+\frac{\left( mb-x\right) ^{\kappa +2}}{\left( \kappa +1\right) \left(
mb-a\right) }\left( \dint\limits_{0}^{1}t^{p}\left\vert \left( \kappa
+1\right) \lambda -t^{\kappa }\right\vert ^{p}dt\right) ^{\frac{1}{p}}\left(
\dint\limits_{0}^{1}\left\vert f^{\prime \prime }\left( tx+m\left(
1-t\right) b\right) \right\vert ^{q}dt\right) ^{\frac{1}{q}}  \label{2-3a}
\end{equation}%
Since $\left\vert f^{\prime \prime }\right\vert ^{q}$ is $\left( \alpha
,m\right) -$convex on $[a,b]$ we get%
\begin{eqnarray}
\dint\limits_{0}^{1}\left\vert f^{\prime \prime }\left( tx+\left( 1-t\right)
a\right) \right\vert ^{q}dt &\leq &\dint\limits_{0}^{1}\left( t^{\alpha
}\left\vert f^{\prime \prime }\left( x\right) \right\vert ^{q}+m\left(
1-t^{\alpha }\right) \left\vert f^{\prime \prime }\left( \frac{a}{m}\right)
\right\vert ^{q}\right) dt  \notag \\
&=&\frac{\left\vert f^{\prime \prime }\left( x\right) \right\vert
^{q}+m\alpha \left\vert f^{\prime \prime }\left( \frac{a}{m}\right)
\right\vert ^{q}}{\alpha +1},  \label{2-3b}
\end{eqnarray}%
\begin{eqnarray}
\dint\limits_{0}^{1}\left\vert f^{\prime \prime }\left( tx+m\left(
1-t\right) b\right) \right\vert ^{q}dt &\leq &\dint\limits_{0}^{1}\left(
t^{\alpha }\left\vert f^{\prime \prime }\left( x\right) \right\vert
^{q}+m\left( 1-t^{\alpha }\right) \left\vert f^{\prime \prime }\left(
b\right) \right\vert ^{q}\right) dt  \notag \\
&=&\frac{\left\vert f^{\prime \prime }\left( x\right) \right\vert
^{q}+m\alpha \left\vert f^{\prime \prime }\left( b\right) \right\vert ^{q}}{%
\alpha +1},  \label{2-3c}
\end{eqnarray}%
and%
\begin{equation}
\dint\limits_{0}^{1}t^{p}\left\vert \left( \alpha +1\right) \lambda
-t^{\kappa }\right\vert ^{p}dt  \label{2-3d}
\end{equation}%
\begin{eqnarray*}
&=&\left\{ 
\begin{array}{cc}
\dint\limits_{0}^{1}t^{\left( \kappa +1\right) p}dt & \lambda =0 \\ 
\dint\limits_{0}^{\left[ \left( \kappa +1\right) \lambda \right] ^{\frac{1}{%
\alpha }}}t^{p}\left[ \left( \kappa +1\right) \lambda -t^{\kappa }\right]
^{p}dt+\dint\limits_{\left[ \left( \kappa +1\right) \lambda \right] ^{\frac{1%
}{\alpha }}}^{1}t^{p}\left[ t^{\kappa }-\left( \kappa +1\right) \lambda %
\right] ^{p}dt, & 0<\lambda <\frac{1}{\kappa +1} \\ 
\dint\limits_{0}^{1}t^{p}\left[ \left( \kappa +1\right) \lambda -t^{\kappa }%
\right] ^{p}dt, & \frac{1}{\kappa +1}\leq \lambda \leq 1%
\end{array}%
\right. \\
&=&\left\{ 
\begin{array}{cc}
\frac{1}{p\left( \kappa +1\right) +1}, & \lambda =0 \\ 
\left[ 
\begin{array}{c}
\frac{\left[ \left( \kappa +1\right) \lambda \right] ^{\frac{1+\left( \kappa
+1\right) p}{\kappa }}}{\kappa }\beta \left( \frac{1+p}{\kappa },1+p\right)
\\ 
+\frac{\left[ 1-\left( \kappa +1\right) \lambda \right] ^{p+1}}{p+1}%
._{2}F_{1}\left( 1-\frac{1+p}{\kappa },1;p+2;1-\left( \kappa +1\right)
\lambda \right)%
\end{array}%
\right] , & 0<\lambda <\frac{1}{\kappa +1} \\ 
\frac{\left[ \left( \kappa +1\right) \lambda \right] ^{\frac{1+\left( \kappa
+1\right) p}{\kappa }}}{\kappa }\beta \left( \frac{1}{\left( \kappa
+1\right) \lambda };\frac{1+p}{\kappa },1+p\right) , & \frac{1}{\kappa +1}%
\leq \lambda \leq 1%
\end{array}%
\right.
\end{eqnarray*}%
Hence, If we use (\ref{2-3b}), (\ref{2-3c}) and (\ref{2-3d}) in (\ref{2-3a}%
), we obtain the desired result. This completes the proof.
\end{proof}

\begin{corollary}
\label{2.2b}In Theorem \ref{2.2}

(a) If we choose $x=\frac{a+mb}{2},$ then we get:%
\begin{eqnarray*}
&&\left\vert (1-\lambda )f\left( \frac{a+mb}{2}\right) +\lambda \left( \frac{%
f(a)+f(mb)}{2}\right) -\frac{\Gamma \left( \kappa +1\right) 2^{\kappa -1}}{%
\left( mb-a\right) ^{\kappa }}\left[ J_{\left( \frac{a+mb}{2}\right)
^{-}}^{\kappa }f(a)+J_{\left( \frac{a+mb}{2}\right) ^{+}}^{\kappa }f(mb)%
\right] \right\vert \\
&\leq &\varphi _{4}^{\frac{1}{p}}\left( \kappa ,\lambda ,p\right) \frac{%
\left( mb-a\right) ^{2}}{8\left( \kappa +1\right) }\left\{ \left( \frac{%
\left\vert f^{\prime \prime }\left( \frac{a+mb}{2}\right) \right\vert
^{q}+\alpha m\left\vert f^{\prime \prime }\left( \frac{a}{m}\right)
\right\vert ^{q}}{\alpha +1}\right) ^{\frac{1}{q}}\right. \\
&&+\left. \left( \frac{\left\vert f^{\prime \prime }\left( \frac{a+mb}{2}%
\right) \right\vert ^{q}+\alpha m\left\vert f^{\prime \prime }\left(
b\right) \right\vert ^{q}}{\alpha +1}\right) ^{\frac{1}{q}}\right\}
\end{eqnarray*}

(b) If we choose $x=\frac{a+mb}{2},\ \lambda =\frac{1}{3},$then we get:%
\begin{eqnarray*}
&&\left\vert \frac{1}{6}\left[ f(a)+4f\left( \frac{a+mb}{2}\right) +f(mb)%
\right] -\frac{\Gamma \left( \kappa +1\right) 2^{\kappa -1}}{\left(
mb-a\right) ^{\kappa }}\left[ J_{\left( \frac{a+mb}{2}\right) ^{-}}^{\kappa
}f(a)+J_{\left( \frac{a+mb}{2}\right) ^{+}}^{\kappa }f(mb)\right] \right\vert
\\
&\leq &\varphi _{4}^{\frac{1}{p}}\left( \kappa ,\frac{1}{3},p\right) \frac{%
\left( mb-a\right) ^{2}}{8\left( \kappa +1\right) }\left\{ \left( \frac{%
\left\vert f^{\prime \prime }\left( \frac{a+mb}{2}\right) \right\vert
^{q}+\alpha m\left\vert f^{\prime \prime }\left( \frac{a}{m}\right)
\right\vert ^{q}}{\alpha +1}\right) ^{\frac{1}{q}}\right. \\
&&+\left. \left( \frac{\left\vert f^{\prime \prime }\left( \frac{a+mb}{2}%
\right) \right\vert ^{q}+\alpha m\left\vert f^{\prime \prime }\left(
b\right) \right\vert ^{q}}{\alpha +1}\right) ^{\frac{1}{q}}\right\}
\end{eqnarray*}

(c) If we choose $x=\frac{a+mb}{2},\ \lambda =\frac{1}{3},$ and $\kappa =1,$%
then we get:%
\begin{eqnarray*}
&&\left\vert \frac{1}{6}\left[ f(a)+4f\left( \frac{a+mb}{2}\right) +f(mb)%
\right] -\frac{1}{\left( mb-a\right) }\dint\limits_{a}^{mb}f(x)dx\right\vert
\\
&\leq &\frac{\left( mb-a\right) ^{2}}{16}\varphi _{4}^{\frac{1}{p}}\left( 1,%
\frac{1}{3},p\right) \left\{ \left( \frac{\left\vert f^{\prime \prime
}\left( \frac{a+mb}{2}\right) \right\vert ^{q}+\alpha m\left\vert f^{\prime
\prime }\left( \frac{a}{m}\right) \right\vert ^{q}}{\alpha +1}\right) ^{%
\frac{1}{q}}\right. \\
&&+\left. \left( \frac{\left\vert f^{\prime \prime }\left( \frac{a+mb}{2}%
\right) \right\vert ^{q}+\alpha m\left\vert f^{\prime \prime }\left(
b\right) \right\vert ^{q}}{\alpha +1}\right) ^{\frac{1}{q}}\right\}
\end{eqnarray*}%
where 
\begin{equation*}
\varphi _{4}\left( 1,\frac{1}{3},p\right) =\left( \frac{2}{3}\right)
^{1+2p}\beta \left( 1+p,1+p\right) +\left( \frac{1}{3}\right)
^{1+p}._{2}F_{1}\left( -p,1;p+2;\frac{1}{3}\right) .
\end{equation*}

(d) If we choose $x=\frac{a+mb}{2}$ and$\ \lambda =0,$then we get:%
\begin{eqnarray*}
&&\left\vert f\left( \frac{a+mb}{2}\right) -\frac{\Gamma \left( \kappa
+1\right) 2^{\kappa -1}}{\left( mb-a\right) ^{\kappa }}\left[ J_{\left( 
\frac{a+mb}{2}\right) ^{-}}^{\kappa }f(a)+J_{\left( \frac{a+mb}{2}\right)
^{+}}^{\kappa }f(mb)\right] \right\vert \\
&\leq &\frac{\left( mb-a\right) ^{2}}{16}\left( \frac{1}{p\left( \kappa
+1\right) +1}\right) ^{\frac{1}{p}}\left\{ \left( \frac{\left\vert f^{\prime
\prime }\left( \frac{a+mb}{2}\right) \right\vert ^{q}+\alpha m\left\vert
f^{\prime \prime }\left( \frac{a}{m}\right) \right\vert ^{q}}{\alpha +1}%
\right) ^{\frac{1}{q}}\right. \\
&&+\left. \left( \frac{\left\vert f^{\prime \prime }\left( \frac{a+mb}{2}%
\right) \right\vert ^{q}+\alpha m\left\vert f^{\prime \prime }\left(
b\right) \right\vert ^{q}}{\alpha +1}\right) ^{\frac{1}{q}}\right\}
\end{eqnarray*}

(e) If we choose $x=\frac{a+mb}{2}$ and $\lambda =1,$then we get:%
\begin{eqnarray*}
&&\left\vert \frac{f(a)+f(mb)}{2}-\frac{\Gamma \left( \kappa +1\right)
2^{\kappa -1}}{\left( mb-a\right) ^{\kappa }}\left[ J_{\left( \frac{a+mb}{2}%
\right) ^{-}}^{\kappa }f(a)+J_{\left( \frac{a+mb}{2}\right) ^{+}}^{\kappa
}f(mb)\right] \right\vert \\
&\leq &\frac{\left( mb-a\right) ^{2}}{16}\varphi _{4}^{\frac{1}{p}}\left(
\kappa ,1,p\right) \left\{ \left( \frac{\left\vert f^{\prime \prime }\left( 
\frac{a+mb}{2}\right) \right\vert ^{q}+\alpha m\left\vert f^{\prime \prime
}\left( \frac{a}{m}\right) \right\vert ^{q}}{\alpha +1}\right) ^{\frac{1}{q}%
}\right. \\
&&+\left. \left( \frac{\left\vert f^{\prime \prime }\left( \frac{a+mb}{2}%
\right) \right\vert ^{q}+\alpha m\left\vert f^{\prime \prime }\left(
b\right) \right\vert ^{q}}{\alpha +1}\right) ^{\frac{1}{q}}\right\}
\end{eqnarray*}%
where%
\begin{equation*}
\varphi _{4}\left( \kappa ,1,p\right) =\frac{\left( 1+\kappa \right) ^{\frac{%
p\left( \kappa +1\right) +1}{\kappa }}}{\kappa }\beta \left( \frac{1}{%
1+\kappa };\frac{1+p}{\kappa },1+p\right) .
\end{equation*}

(g) If we choose $x=\frac{a+mb}{2},\lambda =1$ and $\kappa =1,$then we get:$%
\ $%
\begin{eqnarray*}
&&\left\vert \frac{f(a)+f(mb)}{2}-\frac{1}{\left( mb-a\right) }%
\dint\limits_{a}^{mb}f(x)dx\right\vert \\
&\leq &\frac{\left( mb-a\right) ^{2}}{4}\left( 2\beta \left( \frac{1}{2}%
;1+p,1+p\right) \right) ^{\frac{1}{p}}\left\{ \left( \frac{\left\vert
f^{\prime \prime }\left( \frac{a+mb}{2}\right) \right\vert ^{q}+\alpha
m\left\vert f^{\prime \prime }\left( \frac{a}{m}\right) \right\vert ^{q}}{%
\alpha +1}\right) ^{\frac{1}{q}}\right. \\
&&+\left. \left( \frac{\left\vert f^{\prime \prime }\left( \frac{a+mb}{2}%
\right) \right\vert ^{q}+\alpha m\left\vert f^{\prime \prime }\left(
b\right) \right\vert ^{q}}{\alpha +1}\right) ^{\frac{1}{q}}\right\}
\end{eqnarray*}%
where 
\begin{equation*}
2\beta \left( \frac{1}{2};1+p,1+p\right) =\beta \left( 1+p,1+p\right) .
\end{equation*}
\end{corollary}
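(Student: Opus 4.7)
\smallskip
\noindent\textbf{Proof proposal for Corollary \ref{2.2b}.} The entire corollary is obtained by substituting the stated values of $x$, $\lambda$, and $\kappa$ into Theorem \ref{2.2}, so the plan is to specialize the master inequality \eqref{2-3} and simplify the resulting expressions. First I would record two algebraic observations that drive every simplification: (i) at $x=\frac{a+mb}{2}$ one has $x-a=mb-x=\frac{mb-a}{2}$, so $(x-a)^{\kappa+2}=(mb-x)^{\kappa+2}=\frac{(mb-a)^{\kappa+2}}{2^{\kappa+2}}$, and consequently the common prefactor $\frac{(x-a)^{\kappa+2}}{(\kappa+1)(mb-a)}$ equals $\frac{(mb-a)^{\kappa+1}}{2^{\kappa+2}(\kappa+1)}$; (ii) at the same midpoint the coefficient $(mb-x)^{\kappa+1}-(x-a)^{\kappa+1}$ of $f'(x)$ in the definition of $I_{f}$ vanishes, so the $f'$-term drops out entirely, and the prefactor of $f(x)$ combines to $\frac{(mb-a)^{\kappa-1}}{2^{\kappa-1}}$ while the $f(a),f(mb)$ terms combine to $\frac{(mb-a)^{\kappa-1}}{2^{\kappa}}[f(a)+f(mb)]$.

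Multiplying both sides of \eqref{2-3} by $\frac{2^{\kappa-1}}{(mb-a)^{\kappa-1}}$ (equivalently, reading the left-hand side as the quantity displayed in part (a)) then yields part (a) directly, with the right-hand side acquiring the factor $\frac{(mb-a)^{2}}{8(\kappa+1)}$. Parts (b) and (e) follow by setting $\lambda=\tfrac{1}{3}$ and $\lambda=1$ respectively in part (a); the bounds are left in terms of $\varphi_{4}$ evaluated at those arguments. For part (d) I would simply evaluate $\varphi_{4}(\kappa,0,p)=\frac{1}{p(\kappa+1)+1}$ from the top branch of the definition in Theorem \ref{2.2} and substitute.

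Parts (c) and (g) additionally take $\kappa=1$. Here the reduction $\Gamma(\kappa+1)=\Gamma(2)=1$ and the identity $J_{x^{-}}^{1}f(a)+J_{x^{+}}^{1}f(mb)=\int_{a}^{mb}f(t)\,dt$ convert the Riemann--Liouville integral terms on the left into the classical mean $\frac{1}{mb-a}\int_{a}^{mb}f(t)\,dt$, recovering the Simpson-type statement in (c) and the trapezoid-type statement in (g). The closed forms for $\varphi_{4}(1,\tfrac{1}{3},p)$ and $\varphi_{4}(1,1,p)$ are then obtained by plugging $(\kappa+1)\lambda=\tfrac{2}{3}$ (middle branch) and $(\kappa+1)\lambda=2$ (bottom branch) into the formula of Theorem \ref{2.2}; for (g) one also uses the elementary incomplete-Beta identity $2\beta(\tfrac{1}{2};1+p,1+p)=\beta(1+p,1+p)$, which follows from the symmetry $t\mapsto 1-t$ in the integrand $t^{p}(1-t)^{p}$.

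The only non-bookkeeping step is tracking the absorption of powers of $2$ and $(\kappa+1)$ when the $\tfrac{1}{p}$ exponent is pulled through $\varphi_{4}$ in parts (g) (and the corresponding rewrites in (c)); this will be the main place where a careful check is needed, since the identity $\varphi_{4}^{1/p}(1,1,p)/16=(2\beta(\tfrac{1}{2};1+p,1+p))^{1/p}/4$ used implicitly in (g) hinges on matching the exponent $2p+1$ in $2^{2p+1}\beta(\tfrac{1}{2};1+p,1+p)$ with the factor of $4^{p}\cdot 2$ pulled outside the $1/p$ root. Everything else is straightforward substitution into \eqref{2-3}, so no new analytic ingredient beyond Theorem \ref{2.2} is required.
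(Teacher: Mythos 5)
Your proposal is correct and coincides with the paper's (implicit) argument: the corollary is stated without proof as a direct specialization of Theorem \ref{2.2}, and your midpoint identities $x-a=mb-x=\frac{mb-a}{2}$, the vanishing of the $f^{\prime }$-term, the reduction $J_{x^{-}}^{1}f(a)+J_{x^{+}}^{1}f(mb)=\int_{a}^{mb}f(t)\,dt$ at $\kappa =1$, and the branch-wise evaluations of $\varphi _{4}$ are exactly the computations required. Carrying them out faithfully would in fact expose two slips in the printed statement --- the factor $\frac{\left( mb-a\right) ^{2}}{16}$ in parts (d) and (e) should read $\frac{\left( mb-a\right) ^{2}}{8\left( \kappa +1\right) }$ for general $\kappa $, and the displayed $\varphi _{4}\left( 1,\frac{1}{3},p\right) $ in part (c) omits the factor $\frac{1}{p+1}$ on the hypergeometric term that the general formula in Theorem \ref{2.2} carries --- but these are defects of the source, not of your method.
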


\end{document}